\newcommand{\Gnh}{{G_{n, 1/2}}}
\newtheorem{firsttheorem}{Proposition}
\newtheorem{theorem}[firsttheorem]{Theorem}
\newtheorem{conjecture}[firsttheorem]{Conjecture}
\newtheorem{proposition}[firsttheorem]{Proposition}
\newtheorem*{lemma*}{Lemma}
\newcommand{\mc}{\mathcal}
\newcommand{\Pb}{{\mathbb{P}}}
\newcommand{\Prob}{{\mathbf{P}}}
\renewcommand{\le}{\leqslant}
\renewcommand{\ge}{\geqslant}
\renewcommand{\epsilon}{\varepsilon}
\begin{document}

\title{On a question of Erd\H{o}s and Gimbel on the cochromatic number}

\author{
Annika Heckel\thanks{Matematiska institutionen, Uppsala universitet, Box 480, 751 06 Uppsala, Sweden. Email: \texttt{annika.heckel@math.uu.se}. Funded by the Swedish Research Council, Starting Grant 2022-02829.}
}

\date{December 3, 2024}
\maketitle
	
\begin{abstract}
In this note, we show that the difference between the chromatic and the cochromatic number of the random graph $G_{n,1/2}$ is not whp bounded by $n^{1/2-o(1)}$, addressing a question of Erd\H{o}s and Gimbel.
\end{abstract}

\section{Introduction}
The cochromatic number $\zeta(G)$ of a graph $G$ is the minimum number of colours needed for a vertex colouring where every colour class is either an independent set or a clique. If $\chi(G)$ denotes the usual chromatic number, then clearly $\zeta(G) \le \chi(G)$. Using classical methods, it is not hard to show that for the random graph $G_{n, 1/2}$, with high probability\footnote{As usual, we say that a sequence $(E_n)_{n \ge 0}$ of events holds \emph{with high probability (whp)} if $\Pb(E_n) \rightarrow 1$ as $n \rightarrow \infty$.} we have $\zeta(\Gnh) \sim \chi(\Gnh) \sim \frac{n}{2\log_2 n}$.

 Erd\H{o}s and Gimbel \cite{erdos1993some} (see also \cite{gimbel2016some})  asked the following question: For $G \sim G_{n, 1/2}$, does the difference $\chi(G)-\zeta(G)$ tend to infinity as $n \rightarrow \infty$? In other words, is there a function $f(n) \rightarrow \infty$ such that, with high probability,
\[
\chi(G) - \zeta(G) > f(n)?
\]
At a conference on random graphs in Pozna\'n\footnote{most likely in 1991, or possibly in 1989, based on Erd\H{o}s's and Gimbel's participation records}, Erd\H{o}s offered \$100 for the solution if the answer was `yes', and \$1000 if the answer was `no' (although later said to Gimbel that perhaps \$1000 was too much) \cite{gimbel2016some}. The question is listed as Problem \#625 on Thomas Bloom's Erd\H{o}s Problems website \cite{erdosproblem625}.

In this note, we show that it is \emph{not} the case that the difference $\chi(G) - \zeta(G)$ is whp bounded, and that in fact, it is not whp bounded by $n^{1/2-o(1)}$. It turns out that any function $g(n)$ so that whp $\chi(G)-\zeta(G) \le g(n)$ cannot be smaller than the concentration interval length of the chromatic number $\chi(\Gnh)$, for which corresponding lower bounds were recently obtained \cite{heckel2019nonconcentration,HRHowdoes,heckel2023colouring}. Formally we prove the following statement.

\begin{theorem}\label{theorem:cochromatic}
Let $G \sim G_{n, 1/2}$. There is a constant $c>0$ so that for any sequence of integers $g(n)$ such that 
\begin{equation}\label{eq:gcondition}
\Prob \big(\chi(G) - \zeta(G)  \le g(n) \big) >0.999,
\end{equation}
there is a sequence of integers $n^*$ such that
\[
g(n^*) >  c \frac{\sqrt{n^*} \log \log n^*}{\log^3 n^*}.
\]
\end{theorem}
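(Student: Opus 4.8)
The plan is to deduce Theorem~\ref{theorem:cochromatic} from two facts about $\Gnh$: first, the non-concentration of the chromatic number established in \cite{heckel2019nonconcentration,HRHowdoes,heckel2023colouring}, and second --- this is the new input --- a \emph{concentration} statement for the cochromatic number that is considerably stronger than anything true for $\chi$. Fix a constant $c'>0$ such that (in the quantitative form of the cited results) for infinitely many $n$ every interval of length $c'\sqrt n\log\log n/\log^3 n$ contains $\chi(\Gnh)$ with probability less than $0.99$, and set $L(n)=\tfrac{c'}{3}\cdot\tfrac{\sqrt n\,\log\log n}{\log^3 n}$. The key lemma to be proved is the following: \emph{there is a function $s(n)=o\!\big(\sqrt n\,\log\log n/\log^3 n\big)$ and integers $z_n$ such that whp $z_n\le\zeta(\Gnh)\le z_n+s(n)$}; that is, $\zeta(\Gnh)$ is concentrated in a window much shorter than the non-concentration window of $\chi(\Gnh)$.

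Granting the lemma, the theorem follows quickly. Suppose $g$ satisfies \eqref{eq:gcondition} but, contrary to the conclusion (with $c=c'/3$), $g(n)\le L(n)$ for all large $n$. On the intersection of the event in \eqref{eq:gcondition} with the (whp) event of the lemma --- an event of probability $>0.999-o(1)$ --- we have $z_n\le\zeta(\Gnh)\le\chi(\Gnh)\le\zeta(\Gnh)+g(n)\le z_n+s(n)+L(n)$, so $\chi(\Gnh)$ lies in a fixed interval of length $s(n)+L(n)<c'\sqrt n\log\log n/\log^3 n$ (for $n$ large) with probability $>0.99$. For infinitely many $n$ this contradicts the non-concentration of $\chi(\Gnh)$; picking $n^*$ from that subsequence gives the statement. (The numerical constants in \eqref{eq:gcondition} are exactly what keeps this bookkeeping tight; they can be traded against the non-concentration constant.)

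It remains to prove the lemma, which is the heart of the matter. For the upper bound $\zeta(\Gnh)\le z_n+s(n)$ one needs a cochromatic colouring with a very precisely controlled number of classes --- far sharper than the best available constructions for $\chi$, where a fluctuating "leftover" set of vertices is exactly the obstruction to pinning the value down. The point to exploit is the extra degree of freedom in a cochromatic colouring: leftover vertices may be mopped up with \emph{cliques} as well as independent sets, and since $\Gnh$ is self-complementary in distribution these behave symmetrically, so one should be able to build a sufficiently rigid structured colouring whose class count is within $o(\sqrt n\log\log n/\log^3 n)$ of a deterministic target. For the matching lower bound $\zeta(\Gnh)\ge z_n$ one starts from $\zeta(G)\ge n/\max(\alpha(G),\omega(G))$ together with the two-point concentration of $\alpha(\Gnh)$ and $\omega(\Gnh)$; but this is far too weak on its own (an uncertainty of $1$ in $\max(\alpha,\omega)$ already costs $\Theta(n/\log^2 n)$), so one must additionally show that in any cochromatic colouring the total "slack" --- the number of class-slots not filled to the maximum size --- is whp only $o(\sqrt n\log\log n/\log^3 n)$.

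I expect this last rigidity statement to be the main obstacle. It is precisely the phenomenon that \emph{fails} for the chromatic number --- for $\chi$ there is genuine $n^{1/2-o(1)}$ randomness in how efficiently the vertices pack into independent sets, which is what the non-concentration results detect --- so establishing it for $\zeta$ must use in an essential way both that $\zeta\le\chi$ and the symmetry between cliques and independent sets in $\Gnh$ (the two available "packing regimes" can absorb the fluctuation that a single regime cannot). Controlling near-optimal cochromatic colourings simultaneously from above and below to this precision, uniformly over the random graph, is the technical core of the argument.
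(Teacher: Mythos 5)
The reduction in your second paragraph is sound as bookkeeping, but it rests entirely on your ``key lemma'': that $\zeta(\Gnh)$ is concentrated on an interval of length $o\big(\sqrt{n}\,\log\log n/\log^3 n\big)$. You do not prove this lemma --- you explicitly identify it as the technical core and leave it as a programme --- and this is a genuine gap rather than a routine omission. No concentration bound for $\zeta(\Gnh)$ (or for $\chi(\Gnh)$) at anywhere near that precision is known: the best available upper bound on the concentration window, via Alon's subgraph argument cited in the paper's discussion, is of order $\sqrt{n}/\log n$, which exceeds your target window by a factor of roughly $\log^2 n/\log\log n$. Worse, the lemma is quite plausibly false: the mechanism behind the non-concentration of $\chi(\Gnh)$ is the fluctuation in the number of near-maximum independent sets, and cochromatic colourings are subject to the same fluctuations (now together with those of near-maximum cliques), so one should expect $\zeta(\Gnh)$ to vary on the scale $n^{1/2-o(1)}$ as well. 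In short, your route would require resolving a concentration problem at least as delicate as the ones the theorem is built upon, and the heuristic appeal to ``cliques mopping up the leftover vertices'' does not come close to controlling the class count to the required accuracy.

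The paper's proof needs no control of $\zeta$ at all beyond the single deterministic identity $\zeta(G)=\zeta(\bar G)$, where $\bar G$ is the complement of $G$. Applying \eqref{eq:gcondition} to $\bar G\sim\Gnh$ gives that with probability at least $0.999$ one has $\chi(\bar G)\le\zeta(\bar G)+g(n)=\zeta(G)+g(n)\le\chi(G)+g(n)$. Taking $s_n$ to be the smallest $k$ with $\Prob(\chi(G)\le k)\ge 0.05$, the event $\{\chi(G)\le s_n\}$ is a down-set and $\{\chi(\bar G)\le s_n+g(n)\}$ is an up-set in the edges of $G$, so Harris's Lemma upgrades the above to $\Prob\big(\chi(\bar G)\le s_n+g(n)\big)\ge 0.98$; since $\chi(\bar G)$ and $\chi(G)$ are equidistributed, $\chi(\Gnh)$ lies in $[s_n,s_n+g(n)]$ with probability greater than $0.9$, and Theorem~\ref{theorem:nonconcentration} forces $g(n^*)$ to be large along a subsequence. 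If you want to repair your outline, replace your concentration lemma with this observation: you do not need $\zeta$ to be concentrated, only that it is literally the same random variable for $G$ and $\bar G$, which lets the non-concentration of $\chi$ do all the work.
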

The proof relies on a comparison of the chromatic and co-chromatic numbers of $G \sim G_{n, 1/2}$ and of the complement graph $\bar{G} \sim G_{n, 1/2}$, which is obtained from $G$ by exchanging all the edges and non-edges. Clearly $\zeta(G)=\zeta(\bar G)$, so for a function $g(n)$ as above, $\chi(G)$ and $\chi(\bar G)$ are likely to be at most $g(n)$ apart from each other. Of course $\chi(G)$ and $\chi(\bar G)$ have the same distribution $\chi(\Gnh)$. Informally speaking, to see why $g(n)$ has to be at least the concentration interval length of this distribution, suppose that we have some lower bound on the concentration interval length; for example, a statement saying that any interval containing $\chi(\Gnh)$ with probability at least 0.9 has length at least $\ell(n)$. Then if $X_1, X_2 \sim \chi(G_{n,1/2})$ were independent samples of this distribution, they would be reasonably likely to be at least about $\ell(n)$ apart from each other. Of course $\chi(G)$ and $\chi(\bar G)$ are not independent, but the first is an increasing  and the other a decreasing function of the edges of $G \sim \Gnh$, and so with the help of Harris's Lemma we can draw the same conclusion. So we know that $\chi(G)$ and $\chi(\bar G)$ are both likely to be at most $g(n)$ apart, but also reasonably likely to be at least $\ell(n)$ apart and it follows that $g(n) \ge \ell(n)$.

Independently from this note, Raphael Steiner recently also discovered the connection between $\chi(G)-\zeta(G)$ and the concentration interval length of $\chi(G_{n, 1/2})$. For his work on this and two other questions of Erd\H{os}, Gimbel and Straight, see \cite{steiner2024cochromatic}.

\section{Proof of Theorem~\ref{theorem:cochromatic}}

Turning to the details, let us first state the non-concentration result for $\chi(\Gnh)$ that we will use, which follows by combining Theorem 8 from \cite{HRHowdoes} and Theorem 1.2 from \cite{heckel2023colouring}.

\begin{theorem}[\cite{HRHowdoes,heckel2023colouring}] \label{theorem:nonconcentration}
	There is a constant $c>0$ so that for any sequence of intervals $[s_n, t_n]$ such that $\Prob \big(\chi(G_{n, 1/2}) \in [s_n, t_n])>0.9$, there is a sequence of integers $n^*$ such that
	\[
	t_{n^*} - s_{n^*} > c \frac{\sqrt{n^*} \log \log n^*}{\log^3 n^*}.
	\]
\end{theorem}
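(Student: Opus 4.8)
The statement is a non-concentration result: it asserts that the distribution of $\chi(\Gnh)$ cannot be squeezed into an interval much shorter than $\sqrt{n}/\mathrm{polylog}(n)$, at least along a subsequence and at the $0.9$ confidence level. Since the excerpt already attributes it to a combination of \cite[Theorem 8]{HRHowdoes} and \cite[Theorem 1.2]{heckel2023colouring}, the natural plan is to take these two as the two halves of the argument and glue them, rather than to reprove non-concentration from scratch. So first I would isolate exactly what each of the two cited theorems delivers, normalise their hypotheses (probability thresholds, ``interval of consecutive values'' versus ``set of values,'' and whether the bound holds for all large $n$ or only along a subsequence), and then check that the weaker of the two confidence levels and the weaker of the two widths are still compatible with the claimed constant $c$ and the claimed threshold $0.9$.

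Concretely, I expect \cite[Theorem 1.2]{heckel2023colouring} to supply the quantitative anti-concentration engine: a lower bound of the form ``$\chi(\Gnh)$ spreads over $\gtrsim w(n)$ values,'' with $w(n) = \sqrt{n}\,\log\log n/\log^3 n$, obtained from a precise count of near-optimal colourings — controlling the colour-class-size distribution around the independence number $\alpha(\Gnh) \approx 2\log_2 n$ and showing that the number of colours actually used genuinely fluctuates. I expect \cite[Theorem 8]{HRHowdoes} to be the transfer/packaging statement that converts such a spread into the exact ``any interval capturing $0.9$ of the mass has length $> c\,w(n)$ for some $n^*$'' phrasing, presumably via the fact that $\chi$ is an increasing function of the edges together with a self-improving or subsequence-extraction step. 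The gluing is then routine in principle: feed the width $w(n)$ from the engine into the hypothesis of the transfer theorem, verify the probability bookkeeping, and read off $c$ and the subsequence $n^*$.

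The main obstacle, granting that the two black boxes are as I expect, is the bookkeeping of constants, confidence levels, and subsequences: the two source theorems are almost certainly phrased with different probability thresholds (perhaps ``whp'' rather than a fixed $0.9$), so I would need to degrade one statement's confidence to match the other — intersecting the relevant events, or re-running the coupling with a union bound — while preserving the width up to the constant $c$, and to ensure that the subsequence $n^*$ produced by the transfer theorem is the \emph{same} subsequence on which the quantitative bound is available (recall that two-point-type concentration may genuinely hold for sporadic $n$, which is why the conclusion only claims a subsequence). If instead I were forced to prove the anti-concentration engine itself, the truly hard part would be the lower bound on the fluctuation of the number of colours: vertex-exposure martingale and Azuma-type arguments yield only the matching \emph{upper} bound on the concentration window, so one needs genuine structural input — a delicate two-round exposure in which a ``core'' subgraph fixes most of the colouring while a random remainder provably forces a non-concentrated number of extra colours — with the $\log^3 n$ and $\log\log n$ factors emerging from the second-order expansion of $\alpha(\Gnh)$.
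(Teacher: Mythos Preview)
Your proposal is correct and matches the paper's approach exactly: the paper does not prove Theorem~\ref{theorem:nonconcentration} at all but simply imports it as a black box, stating in one sentence that it ``follows by combining Theorem~8 from \cite{HRHowdoes} and Theorem~1.2 from \cite{heckel2023colouring}.'' Your discussion of how the two ingredients should be glued, the confidence-level bookkeeping, and the subsequence extraction is more detail than the paper itself supplies.
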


Theorem~\ref{theorem:cochromatic} follows directly from Theorem~\ref{theorem:nonconcentration} and the following proposition.

\begin{proposition}\label{prop:hilfs}
	Let $g(n)$ be a sequence of integers which satisfy \eqref{eq:gcondition}, then there is a sequence of intervals $[s_n, t_n]$ with $t_n-s_n=g(n)$ so that 
	\[\Prob \big(\chi(G_{n, 1/2}) \in [s_n, t_n])>0.9.\]
\end{proposition}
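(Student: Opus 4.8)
The plan is to build the interval $[s_n, t_n]$ directly from the distribution of $\chi(\Gnh)$ and then verify that condition \eqref{eq:gcondition} forces it to capture $90\%$ of the mass. Fix $n$. Let $F$ denote the distribution function of $\chi(\Gnh)$, and pick the smallest integer $s_n$ with $\Prob(\chi(\Gnh) \le s_n) \ge 0.05$, say, and set $t_n = s_n + g(n)$; the precise threshold can be tuned at the end. The key observation, already sketched in the introduction, is to couple $\chi(G)$ and $\chi(\bar G)$ on the same $G \sim \Gnh$: both have marginal distribution $\chi(\Gnh)$, and $\chi$ is an increasing function of the edge set while $\chi(\bar G)$ is a decreasing function, so by Harris's Lemma the pair is negatively correlated in the relevant sense — concretely, for any thresholds $a \le b$,
\[
\Prob\big(\chi(\bar G) \le a,\ \chi(G) \ge b\big) \ge \Prob\big(\chi(\bar G) \le a\big)\,\Prob\big(\chi(G) \ge b\big).
\]

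Now suppose, for contradiction, that $\Prob(\chi(\Gnh) \in [s_n, t_n]) \le 0.9$. By the choice of $s_n$ we have $\Prob(\chi(\Gnh) < s_n) < 0.05$ (by minimality of $s_n$), so $\Prob(\chi(\Gnh) > t_n) \ge 0.1 - 0.05 = 0.05$. Apply the displayed Harris inequality with $a = s_n - 1$ and $b = t_n + 1$: this gives
\[
\Prob\big(\chi(\bar G) \le s_n - 1,\ \chi(G) \ge t_n + 1\big)\ \ge\ \Prob\big(\chi(\Gnh) < s_n\big)\cdot \Prob\big(\chi(\Gnh) > t_n\big).
\]
Here the right-hand side needs a lower bound; the mild annoyance is that $\Prob(\chi(\Gnh) < s_n)$ could in principle be tiny if the distribution has a large atom just below $s_n$. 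To sidestep this, I would instead define $s_n$ as the smallest integer with $\Prob(\chi(\Gnh) \le s_n) \ge \alpha$ for a well-chosen constant $\alpha$ and exploit that $\chi(\Gnh)$ is known to spread over an interval of length $\omega(1)$ (indeed this is exactly the content of Theorem~\ref{theorem:nonconcentration}, or one can use the cruder two-point-concentration-fails fact), so no single value carries more than a vanishing fraction of the mass for large $n$; hence $\Prob(\chi(\Gnh) = s_n) = o(1)$ and both tail probabilities $\Prob(\chi(\Gnh) < s_n)$ and $\Prob(\chi(\Gnh) > t_n)$ are bounded below by positive constants once we are in the bad case. The product on the right is then at least some absolute constant $\delta > 0$.

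On the event in the last display we have $\chi(G) - \chi(\bar G) \ge (t_n + 1) - (s_n - 1) = g(n) + 2 > g(n)$. Since $\zeta(G) = \zeta(\bar G)$, we have $\chi(G) - \zeta(G) = \big(\chi(\bar G) - \zeta(\bar G)\big) + \big(\chi(G) - \chi(\bar G)\big)$, and $\chi(\bar G) - \zeta(\bar G) \ge 0$ always, so on this event $\chi(G) - \zeta(G) > g(n)$ as well. Therefore
\[
\Prob\big(\chi(G) - \zeta(G) > g(n)\big)\ \ge\ \delta.
\]
Choosing the constants $\alpha$ and the $0.9$/$0.999$ budget so that $\delta > 0.001$ — which is comfortably possible since for large $n$ the atom correction is negligible and one can take $\alpha$ near $1/2$, making both tails close to their nominal sizes — contradicts \eqref{eq:gcondition}. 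Hence $\Prob(\chi(\Gnh) \in [s_n, t_n]) > 0.9$ for all large $n$, and for the finitely many small $n$ we simply enlarge the interval (or note the statement is about a tail sequence $n^*$ anyway), giving the proposition.

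I expect the only real subtlety to be the bookkeeping of constants in the last paragraph together with the atom estimate: one must make sure that the loss from the possible point mass at $s_n$ (or $t_n$) is genuinely $o(1)$ and does not eat into the constant $\delta$, and that the $0.999$ in \eqref{eq:gcondition} is a generous enough budget to absorb the slack between the Harris lower bound $\delta$ and the target $0.9$. Everything else — the Harris/Lieb–Harris monotonicity input, the identity $\zeta(G) = \zeta(\bar G)$, and the reduction — is routine.
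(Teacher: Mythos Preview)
Your overall strategy --- define $s_n$ as the $0.05$-quantile, couple $\chi(G)$ with $\chi(\bar G)$, and use Harris together with $\zeta(G)=\zeta(\bar G)$ --- is exactly the paper's, and your contradiction version does work. But the ``atom problem'' you spend the last two paragraphs worrying about is self-inflicted: take $a=s_n$ rather than $a=s_n-1$ in your Harris inequality. Then $\Prob(\chi(\bar G)\le s_n)=\Prob(\chi(G)\le s_n)\ge 0.05$ by the very definition of $s_n$, and on the event $\{\chi(\bar G)\le s_n,\ \chi(G)\ge t_n+1\}$ you still get $\chi(G)-\chi(\bar G)\ge g(n)+1$, hence $\chi(G)-\zeta(G)>g(n)$. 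The Harris product is now at least $0.05\cdot 0.05=0.0025>0.001$, contradicting \eqref{eq:gcondition} --- no atom estimate, no appeal to non-concentration, and valid for every $n$. Your proposed workaround is both unnecessary and shaky: Theorem~\ref{theorem:nonconcentration} as stated is a subsequence statement and does not directly give $\Prob(\chi(\Gnh)=s_n)=o(1)$ for all large $n$, and invoking it inside the proof of the very lemma meant to reduce to it is awkward.

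For comparison, the paper runs Harris in the other direction and avoids the contradiction argument. With the same $s_n$, it sets $\mc D=\{\chi(G)\le s_n\}$ (a down-set, $\Prob(\mc D)\ge 0.05$) and $\mc U=\{\chi(\bar G)\le s_n+g(n)\}$ (an up-set), uses \eqref{eq:gcondition} to get $\Prob(\mc U\cap\mc D)\ge\Prob(\mc D)-0.001$, and then Harris in the form $\Prob(\mc U)\ge\Prob(\mc U\cap\mc D)/\Prob(\mc D)\ge 0.98$. Since $\bar G\sim\Gnh$ this gives $\Prob(\chi(G)>s_n+g(n))\le 0.02$, and together with $\Prob(\chi(G)<s_n)<0.05$ the interval captures probability at least $0.93>0.9$, directly. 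Same ingredients, slightly cleaner packaging.
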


\begin{proof}[Proof of Proposition~\ref{prop:hilfs}]
Let $G\sim \Gnh$ and let $\bar G$ be the complement graph of $G$ (which contains exactly the edges which are missing in $G$). Then $\bar G \sim \Gnh$ and $\zeta(\bar G)=\zeta(G)$, so  with probability at least 0.999, 
\begin{equation}\label{eq:close}
\chi(\bar G) \le \zeta(\bar G)+g(n) = \zeta(G)+g(n) \le \chi(G)+g(n).
\end{equation}

Now let $s_n$ be the smallest  integer $k$ such that $\Prob(\chi(G) \le k) \ge 0.05$, and define the following events:
\begin{align*}
	\mc D &= \{\chi(G) \le s_n\} ,\\
	\mc U &= \{\chi(\bar G) \le s_n+g(n)\}.
\end{align*}
Then $\mc D$ is a down-set and $\mc U$ is an up-set in the edges of $G \sim \Gnh$. By the definition of $s_n$, \[\Prob(\mc D) \ge 0.05.\] Furthermore, if $\mc D$ holds, then either \eqref{eq:close} does not hold (which has probability at most 0.001), or \eqref{eq:close} holds and implies $\mc U$, so
\[\Prob(\mc U \cap \mc D ) \ge \Prob(\mc D) - \Prob\big(\chi(\bar G) > \chi(G)+g(n)\big) \ge \Prob(\mc D)-0.001.\]
By Harris's Lemma (see for example \S2, Lemma~3 in \cite{bollobas2006percolation}),
\[
\Prob(\mc U) \ge \Prob(\mc U \cap \mc D) /\Prob(\mc D) \ge 1-\frac{0.001}{\Prob(\mc D)} \ge 1-\frac{0.001}{0.05}= 0.98.
\]
But since $G$ and $\bar G$ have the same distribution, and by the definition of $s_n$, this implies
\begin{align*}
\Prob \big(s_n \le \chi(G) \le s_n+g(n) \big) &= 1 - \Prob\big(\chi(G) \le s_n-1\big) -\Prob\big(\chi(G) >s_n+g(n)\big) \\
&\ge 1-0.05-0.02 > 0.9.
\end{align*}
The claim follows.
\end{proof}
\section{Discussion}

So how about Erd\H{os} and Gimbel's original question: does $\chi(G)-\zeta(G)$ tend to infinity whp for $G \sim \Gnh$? Theorem~\ref{theorem:cochromatic} suggests that the answer is `yes', but of course does not imply~this.  

If we had a result like Theorem~\ref{theorem:cochromatic}, but with the conclusion that $g(n) \ge h(n)$ for every $n$ and some $h(n) \gg \sqrt{n}/ \log n$, this would imply that the answer to the original question is `yes': by an argument of Alon \cite{alonspencer,scott2008concentration},  both $\chi(\Gnh)$ and $\zeta(\Gnh)$ are contained in a sequence of intervals of length about $\sqrt{n}/\log n$, respectively, and consequently so is their difference.\footnote{Formally the statement is: For $G \sim G_{n, 1/2}$ and any function $\omega(n) \rightarrow \infty$, there is a sequence of intervals of length at most $\omega(n) \sqrt{n} / \log n$ that contains $\chi(G)-\zeta(G)$ whp.} Let $[s_n, t_n]$ be such a sequence of intervals, then taking $g(n)=t_n$ would give that $t_n \ge h(n) \gg  \sqrt{n}/ \log n$, which implies that $s_n \gg \sqrt{n}/ \log n$ as well\footnote{for a suitable choice of the arbitrary function $\omega(n) \rightarrow \infty$}, and so whp $\chi(G) -\zeta(G) \ge s_n \gg  \sqrt{n}/ \log n$.

It is reasonable to expect that the chromatic number $\chi(\Gnh)$ is close to its \emph{first moment threshold}, that is, the smallest $k$ such that the expected number of $k$-colourings is at least $1$.\footnote{This is an oversimplification and there are several complications with this heuristic, for a detailed discussion see \S1.3 and the appendix in \cite{HRHowdoes}.} Using the same heuristic for the cochromatic number $\zeta(\Gnh)$, the first moment threshold there should be of order $n/\log^3 n$ smaller than that of the chromatic number: for any $k \sim n/(2\log_2 n)$, the expected number of $k$-cocolourings is multiplied by a factor $2^k= \exp(\Theta(n/\log n))$ when compared to the expected number of $k$-colourings (since we may choose for each colour class whether it is a clique or an independent set); and decreasing the number of colours by 1 should multiply the expectation by a factor $\exp(-\Theta (\log^2 n))$ (like it does for the chromatic number\footnote{Again an oversimplification, and we refer to the discussion in~\cite{HRHowdoes}.}). We therefore make the following conjecture.

\begin{conjecture}
For $G \sim \Gnh$, whp, 
\[\chi(G)-\zeta(G)= \Theta(n/\log^3 n).\]
\end{conjecture}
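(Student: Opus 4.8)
The plan is to establish the two matching bounds $\chi(G)-\zeta(G) = O(n/\log^3 n)$ and $\chi(G)-\zeta(G)=\Omega(n/\log^3 n)$ (each whp) separately, in both cases by turning the first-moment-threshold heuristic of the Discussion into a rigorous statement. Throughout I would work with the characterisation
\[
\zeta(G) = \min_{a \ge 0}\Big( a + \chi\big(G \setminus (Q_1 \cup \dots \cup Q_a)\big)\Big),
\]
where the minimum ranges over all collections of $a$ vertex-disjoint cliques $Q_1,\dots,Q_a$ in $G$: in an optimal cocolouring one may peel off the clique colour classes first, leaving an independent-set colouring of what remains.

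For the \emph{lower bound} — that a cocolouring genuinely saves order $n/\log^3 n$ colours over $\chi(G)$ — the key point is that removing cliques of size $\sim \omega(G) \sim 2\log_2 n$ gains nothing to first order, so the saving must instead come from the counting identity $\E\big[\#\{k\text{-cocolourings}\}\big] = 2^k\,\E\big[\#\{k\text{-colourings}\}\big]$ (each colour class is independently designated a clique or an independent set). This pushes the first-moment threshold of $\zeta(\Gnh)$ down by $\Theta(n/\log^3 n)$ relative to that of $\chi(\Gnh)$: passing from $k$ to $k-1$ colours multiplies the first moment by $\exp(-\Theta(\log^2 n))$, so roughly $\log(2^k)/\log^2 n = \Theta(n/\log^3 n)$ extra decrements are affordable. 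Turning this into a theorem requires (i) that $\chi(\Gnh)$ is not much above its own first-moment threshold, which is an oversimplified form of the (open) sharp chromatic-number problem, and (ii) a second-moment argument for the mixed clique/independent-set partition problem, in the spirit of Achlioptas--Naor and Coja-Oghlan--Vilenchik for $\chi(\Gnh)$, showing that $\zeta(\Gnh)$ concentrates near its first-moment threshold from below as well.

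For the \emph{upper bound} one needs $\zeta(G) \ge \chi(G) - O(n/\log^3 n)$. Feeding the crude simultaneous bounds $\omega(G),\alpha(G) \le 2\log_2 n + O(1)$ into the identity above gives only $\zeta(G) \ge n/(2\log_2 n + O(1))$, hence $\chi(G)-\zeta(G) = O(n\log\log n/\log^2 n)$ — short of the target by a factor of order $\log n \log\log n$. To reach $O(n/\log^3 n)$ one would have to compare $\chi(\Gnh)$ with the first-moment threshold of $\zeta(\Gnh)$ up to an additive error $o(n/\log^3 n)$, i.e.\ again control both quantities to second order.

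The main obstacle is therefore structural rather than a single missing trick: both directions hinge on understanding $\chi(\Gnh)$ (and its cocolouring counterpart) to accuracy $o(n/\log^3 n) = o(\sqrt n\cdot\mathrm{polylog}\,n)$, far beyond the best available estimates, whose error terms are of order $n\log\log n/\log^2 n$. A realistic intermediate target would be an \emph{everywhere} lower bound $\chi(G)-\zeta(G) \ge h(n)$ for some $h(n) \gg \sqrt n/\log n$ — which, as the Discussion notes, already settles the Erd\H{o}s--Gimbel question — obtained by pushing the complementation argument behind Theorem~\ref{theorem:cochromatic} harder, together with sharper (every-$n$) non-concentration estimates for $\chi(\Gnh)$.
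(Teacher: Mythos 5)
The statement you have been asked to prove is a \emph{conjecture}: the paper offers no proof of it, only the heuristic sketch in its Discussion section, and the problem remains open. Your proposal is not a proof either, and to your credit you say so explicitly. Both ingredients you flag as necessary --- (i) locating $\chi(G_{n,1/2})$ within $o(n/\log^3 n)$ of its first-moment threshold, and (ii) a second-moment argument for mixed clique/independent-set partitions pinning $\zeta(G_{n,1/2})$ to the same accuracy --- are open problems; the best known two-sided estimates for $\chi(G_{n,1/2})$ carry error terms of order $n\log\log n/\log^2 n$, which swamp the conjectured difference of order $n/\log^3 n$. So there is a genuine gap, but it is the gap that the conjecture itself represents rather than an oversight in your argument: neither direction of the $\Theta$ can currently be established.

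That said, your heuristic accounting agrees with the paper's and is carried out correctly: the factor $2^k$ relating the expected numbers of $k$-cocolourings and $k$-colourings, the $\exp(-\Theta(\log^2 n))$ cost per removed colour, and hence the $\Theta(n/\log^3 n)$ downward shift of the first-moment threshold; likewise your observation that feeding $\omega(G),\alpha(G)\le 2\log_2 n+O(1)$ into the clique-peeling identity yields only $\chi(G)-\zeta(G)=O(n\log\log n/\log^2 n)$, a factor of order $\log n\log\log n$ short of the target. Your closing suggestion --- aiming instead for an every-$n$ lower bound $h(n)\gg\sqrt{n}/\log n$ via the complementation argument together with Alon's concentration bound, which would settle the Erd\H{o}s--Gimbel question without proving the conjecture --- is precisely the intermediate goal the paper's Discussion identifies. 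In short: your understanding and heuristics are sound, but no proof exists here to compare against, and your plan does not close the conjecture.
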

\section*{Acknowledgements} I would like to thank John Gimbel for bringing this problem to my attention and for helpful discussions, and Svante Janson for his help in tracking down past conference proceedings.

\end{document}